\newtheorem{theorem}{Theorem}[section]
\newtheorem{remark}{Remark}[section]
\newtheorem{lemma}[theorem]{Lemma}
\newtheorem{definition}[theorem]{Definition}
\newtheorem*{definition*}{Definition}
\begin{document}
\title{A discretized point-hyperplane incidence bound in $\mathbb{R}^d$}

\author{Thang Pham \thanks{University of Science, Vietnam National University, Hanoi. Email: \href{mailto:phamanhthang.vnu@gmail.com}{phamanhthang.vnu@gmail.com}}\and Chun-Yen Shen\thanks{Department of Mathematics, National Taiwan University. Email: \href{mailto:cyshen@math.ntu.edu.tw}{cyshen@math.ntu.edu.tw}}\and Nguyen Pham Minh Tri \thanks{Ho Chi Minh University of Education. Email: \href{mailto: nguyenphamminhtri104@gmail.com}{nguyenphamminhtri104@gmail.com}}}
\date{}
\maketitle
\begin{abstract}
Let $P$ be a $\delta$-separated $(\delta, s, C_P)$-set of points in $B(0, 1)\subset \mathbb{R}^d$ and $\Pi$ be a $\delta$-separated $(\delta, t, C_\Pi)$-set of hyperplanes intersecting $B(0, 1)$ in $\mathbb{R}^d$. Define 
    \[I_{C\delta}(P, \Pi)=\#\{(p, \pi)\in P\times \Pi\colon p\in \pi(C\delta)\}.\] Suppose that $s, t\ge \frac{d+1}{2}$,
    then we have $I_{C\delta}(P, \Pi)\lesssim \delta |P||\Pi|$. The main ingredient in our argument is a measure theoretic result due to Eswarathansan, Iosevich, and Taylor (2011) which was proved by using Sobolev bounds for generalized Radon transforms. Our result is essentially sharp, a construction will be provided and discussed in the last section.
\end{abstract}
\section{Introduction}
We start with the following two definitions.
\begin{definition}[$(\delta, s, C)$ set]\label{df:our}
Let $0\le s <\infty$ and $\delta\in (0,1)$ and a constant $C>0$. Given a metric space $(X, d)$, a bounded set $E\subset X$ is called $(\delta, s, C)$-set if for every $\delta\le r\le 1$ and for all ball $B\subset X$ of radius $r$, we have 
\[|E\cap B|_\delta\le C r^s|E|_{\delta},\]
where $|E|_\delta$ denotes the $\delta$-covering of $E$ in the space $(X, d)$. 
\end{definition}
We note that this definition is slightly different compared to the classical one introduced by Katz and Tao in \cite{KatzTao}. 
\begin{definition}[Katz-Tao $(\delta, s, C)$-set]\label{def:kt}
    Let $0\le s <\infty$ and $\delta\in (0,1)$ and a constant $C>0$. Given a metric space $(X, d)$, a bounded set $E\subset X$ is called Katz-Tao $(\delta, s, C)$-set if for every $\delta\le r\le 1$ and for all ball $B\subset X$ of radius $r$, we have 
\[|E\cap B|_\delta\le C \left(\frac{r}{\delta}\right)^s.\]
 
\end{definition}
Note that if $|E|_{\delta}\sim \delta^{-s}$, then the two above definitions are equivalent. 

 Let $P$ be a $\delta$-separated $(\delta, s, C_P)$-set of points in $\mathbb{R}^d$ and $\Pi$ be a $\delta$-separated $(\delta, t, C_\Pi)$-set of hyperplanes intersecting $B(0, 1)$ in $\mathbb{R}^d$. The number of discretized point-plane incidences between $P$ and $\Pi$ is defined by
    \[I_{C\delta}(P, \Pi)=\#\{(p, \pi)\in P\times \Pi\colon p\in \pi(C\delta)\},\] 
where $\pi(C\delta)$ denotes the $C\delta$ neighborhood of the hyperplane $\pi$. In this paper, we treat $C_P$, $C_\Pi$, and other constants as absolutely positive bounded constants. 

Our initial motivation comes from the following recent theorem due to Orponen, Shmerkin, and Wang \cite{motmot}, and Fu and Ren \cite{FuRen} in two dimensions. 
\begin{theorem}\label{twodimension}
    Let $0\le s, t\le 2$. Then, for every $\epsilon>0$, there exists $\delta_0=\delta_0(\epsilon)>0$ such that the following holds for all $\delta\in (0, \delta_0]$. Let $P\subset B(0, 1)\subset \mathbb{R}^2$ be a $\delta$-separated $(\delta, s, \delta^{-\epsilon})$-set of points and $\mathcal{T}$ be a $\delta$-separated $(\delta, t, \delta^{-\epsilon})$-set of tubes intersecting $B(0, 1)$. 
    \begin{enumerate}
    \item If $1\ge t\ge s$ or $1\ge s\ge t$, then 
    \[I_{\delta}(P, \mathcal{T})\lesssim |P||\mathcal{T}|\delta^{\frac{st}{s+t}-O(\epsilon)}.\]
     \item If $t\ge 1\ge s\ge t-1$, then 
    \[I_{\delta}(P, \mathcal{T})\lesssim |P||\mathcal{T}|\delta^{\frac{st}{1+s}-O(\epsilon)}\]
\item If $s\ge 1\ge t\ge s-1$, then 
\[I_{\delta}(P, \mathcal{T})\lesssim |P||\mathcal{T}|\delta^{\frac{st}{1+t}-O(\epsilon)}.\]    
        \item If $t>1$ and $s>1$, then 
        \[I_{\delta}(P, \mathcal{T})\lesssim |P||\mathcal{T}|\delta^{\kappa(s+t-1)-O(\epsilon)},~\kappa=\min \{1/2, ~1/(s+t-1)\}.\]
    \end{enumerate}
\end{theorem}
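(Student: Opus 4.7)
The plan is to treat Theorem~\ref{twodimension} as a combination of two different strategies applied across four parameter regimes. As a first step, I would carry out a dyadic pigeonholing on $P$, on $\mathcal{T}$, and on the incidence multiplicity to pass to a ``uniform'' configuration: every surviving point $p$ is incident to roughly $\mu$ tubes and every surviving tube contains roughly $\nu$ points, so that $\mu|P|\sim \nu|\mathcal{T}|\sim I_\delta(P,\mathcal{T})$. The loss is $\delta^{-O(\epsilon)}$, which is absorbed into the error. Before this step, I would spend some effort comparing Definition~\ref{df:our} to Definition~\ref{def:kt}, and refining $P,\mathcal{T}$ so that their branching numbers are regular across dyadic scales $r\in[\delta,1]$; this is a routine but slightly tedious preparation.

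For the subcritical cases (1)--(3), where at least one of $s,t$ is at most $1$, I would run a multiscale double counting of Fu--Ren type. At an intermediate scale $\rho\in[\delta,1]$ I would cover $P$ by $\rho$-balls and $\mathcal{T}$ by $\rho$-tubes, apply the trivial ball-tube incidence count at scale $\rho$, and recurse inside each ball at scale $\delta$. Optimizing the choice of $\rho$ against the $(\delta,s)$- and $(\delta,t)$-conditions produces the exponents $st/(s+t)$, $st/(1+s)$, and $st/(1+t)$ directly, with the case distinction being which of the two regularity conditions saturates first in the optimization. No sophisticated ingredient beyond elementary geometry is required here.

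The substantive case is (4), where $s,t>1$, and here I would reduce to the Orponen--Shmerkin--Wang bound on $(s,t)$-Furstenberg sets. After the uniformization above, the set $F=\bigcup_{T\in\mathcal{T}}(T\cap P(\delta))$ is morally an $(s,t)$-Furstenberg-type set: one has a $(\delta,t)$-family of tubes, each meeting $F$ in a $(\delta,s)$-set. The Furstenberg-set lower bound from \cite{motmot} then forces $|F|_\delta\gtrsim \delta^{-\kappa(s+t-1)-s+O(\epsilon)}$ for the relevant $\kappa$, and matching this against $|F|_\delta\le |P|\lesssim \delta^{-s}$ together with $I_\delta(P,\mathcal{T})\sim \nu|\mathcal{T}|$ yields the claimed bound $\delta^{\kappa(s+t-1)-O(\epsilon)}$.

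The main obstacle is clearly case~(4): the Furstenberg-set bound that I would cite is itself the main theorem of a substantial paper, relying on Orponen--Shmerkin's multiscale decomposition and a discretized projection theorem used inductively, and I would not attempt to reprove it. A smaller but genuinely technical obstacle is making the reduction between discretized incidences and Furstenberg sets precise in the presence of the $\delta^{-\epsilon}$ losses: one must check that the Frostman constants obtained after pigeonholing still satisfy the hypotheses of the Furstenberg-set theorem at every scale, and that the final $O(\epsilon)$ loss in the exponent does not blow up through the multiscale iteration.
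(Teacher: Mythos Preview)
First a framing remark: the paper does not prove Theorem~\ref{twodimension}; it is quoted as a known result of Orponen--Shmerkin--Wang and Fu--Ren. The only ``proof'' the paper offers is the two-line sketch in the \emph{Main ideas} paragraph: cover a $(\delta,s,C)$-set by roughly $|P|\delta^{s-O(\epsilon)}$ Katz--Tao $(\delta,s)$-sets (via \cite[Lemma~3.5]{motmot}) and then apply Fu--Ren's incidence theorems for Katz--Tao sets, whose hard case relies on the well-spaced tube estimates of Guth--Solomon--Wang \cite{GSW} and Bradshaw \cite{PB}. So the baseline for comparison is: covering reduction $+$ black-box Fu--Ren, with GSW/Bradshaw under the hood.

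Your plan for cases (1)--(3) is in line with what Fu--Ren actually do (a multiscale/two-scale argument with the trivial bound at the coarse scale), so there is no disagreement there beyond the fact that the paper simply cites the result rather than re-running the argument. The divergence is in case~(4). The route the paper points to is \emph{not} a Furstenberg-set reduction but the GSW/Bradshaw incidence bound for well-spaced tubes; your proposal replaces this with an appeal to an $(s,t)$-Furstenberg lower bound from \cite{motmot}.

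That replacement, as written, has a genuine gap. After your uniformisation, each surviving tube $T$ contains $\sim\nu$ points of $P$, but you assert that $T\cap P$ is a one-dimensional $(\delta,s)$-set. For $s>1$ this is meaningless on a line, and even if you intend some effective $s'\le 1$, the global $(\delta,s)$-condition on $P\subset\mathbb{R}^2$ gives only $|T\cap P\cap B(x,r)|\le r^s|P|$, which is a $(\delta,s')$-condition on $T\cap P$ with constant $|P|/\nu$, typically huge. Without a genuine non-concentration hypothesis along each tube, the Furstenberg lower bound cannot be invoked, and your chain ``$|F|_\delta\gtrsim\delta^{-\kappa(s+t-1)-s}$, compare with $|F|_\delta\le|P|$'' does not close to an incidence inequality. (Separately, \cite{motmot} is a radial-projection paper; the Furstenberg-type input you would want is not stated there in the form you need.) If you want to keep a Furstenberg-flavoured argument, you would have to first pass through something equivalent in strength to GSW/Bradshaw anyway; the paper's sketch, which goes through the Katz--Tao covering and then cites Fu--Ren, avoids this detour.
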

Throughout this paper, by $X\lesssim Y$ we mean that $X\le CY$ for some constant $C$, and $X\sim Y$ if $X\lesssim Y\lesssim X$. 

The main purpose of this paper is to extend Theorem \ref{twodimension} to higher dimensions, namely, in $\mathbb{R}^d$ with $d\ge 3$. Our first result is the following. 
\begin{theorem}\label{main-theorem}Let $C>0$, $0\le s, t\le d$. There exists $\delta_0=\delta_0(C, s, t)>0$ such that the following holds for $\delta\in (0, \delta_0)$. 
    Let $P$ be a $\delta$-separated $(\delta, s, C_P)$-set of points in $B(0, 1)\subset \mathbb{R}^d$ and $\Pi$ be a $\delta$-separated $(\delta, t, C_\Pi)$-set of hyperplanes intersecting $B(0, 1)$ in $\mathbb{R}^d$. Define 
    \[I_{C\delta}(P, \Pi)=\#\{(p, \pi)\in P\times \Pi\colon p\in \pi(C\delta)\}.\] Suppose that $s, t > \frac{d+1}{2}$,
    then we have the sharp estimate that $I_{C\delta}(P, \Pi)\lesssim \delta |P||\Pi|$. 
\end{theorem}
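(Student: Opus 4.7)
The plan is to rewrite the combinatorial count $I_{C\delta}(P,\Pi)$ as an integral of the $C\delta$-incidence indicator against natural measures attached to $P$ and $\Pi$, and then invoke the Eswarathansan--Iosevich--Taylor estimate from the abstract, which controls the mass of an $\epsilon$-neighborhood of the point-hyperplane incidence relation via Sobolev bounds for the generalized Radon transform.

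\textbf{Setting up the measures.} Let $\mu := |P|^{-1}\sum_{p\in P}\delta_p$ be the uniform atomic probability measure on $P\subset B(0,1)$, and parametrize the space $\calH$ of hyperplanes meeting $B(0,1)$ (up to sign) by $(\theta,a) \in S^{d-1}\times[-1,1]$ with the product metric, setting $\nu := |\Pi|^{-1}\sum_{\pi\in\Pi}\delta_\pi$. The $(\delta,s,C_P)$ and $(\delta,t,C_\Pi)$ hypotheses translate into the Frostman bounds
\[\mu(B(x,r)) \le C_P r^s, \qquad \nu(B(\pi_0,r)) \le C_\Pi r^t, \qquad r \in [\delta,1].\]
A scale-$\delta$ mollification produces absolutely continuous $\mu_\delta,\nu_\delta$, which I would show satisfy the same Frostman bounds at \emph{all} scales, uniformly in $\delta$. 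The key observation here is the single-atom inequality $1/|P| = \mu(\{p\}) \le \mu(B(p,\delta)) \le C_P\delta^s$, which forces $|P|\gtrsim\delta^{-s}$, bounds $\|\mu_\delta\|_\infty \lesssim \delta^{s-d}$, and yields the sub-scale Frostman bound $\mu_\delta(B(x,r)) \lesssim r^d\cdot \delta^{s-d} \lesssim r^s$ for $r \le \delta$.

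\textbf{Invoking the EIT bound.} With $\mu_\delta,\nu_\delta$ in hand, counting incidences becomes
\[I_{C\delta}(P,\Pi) \sim |P||\Pi|\cdot \iint \1_{\{\dist(x,\pi)\le C\delta\}}\, d\mu_\delta(x)\, d\nu_\delta(\pi),\]
so it suffices to show the double integral is $O(\delta)$. The Eswarathansan--Iosevich--Taylor result, proved via the $(d-1)/2$-order Sobolev smoothing of the generalized Radon transform $Rf(\theta,a) = \int_{x\cdot\theta = a} f$, asserts that for absolutely continuous probability measures $\mu$ on $\mathbb{R}^d$ and $\nu$ on $\calH$ with Frostman exponents $s,t>(d+1)/2$,
\[\iint \1_{\{\dist(x,\pi)\le \epsilon\}}\, d\mu(x)\, d\nu(\pi) \le C\epsilon \qquad \text{for all } \epsilon\in(0,1],\]
with $C$ depending only on $d,s,t$ and the Frostman constants. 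Substituting $\epsilon=C'\delta$ into the previous display gives $I_{C\delta}(P,\Pi)\lesssim \delta|P||\Pi|$ at once. The threshold $(d+1)/2$ is forced by Sobolev bookkeeping: one needs $2\cdot(d-1)/2+1=d$ orders of regularity to combine the two mollified measures into a bounded Radon-type pairing.

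\textbf{Main obstacle.} The delicate step is verifying that the mollified Frostman bound is preserved at \emph{all} scales with $\delta$-independent constants, and that the resulting Riesz energies $I_{s'}(\mu_\delta), I_{t'}(\nu_\delta)$ are uniformly bounded for some $s',t'\in((d+1)/2,\min(s,t))$; both hinge on the observation $|P|\gtrsim\delta^{-s}$ recorded above, together with the analogous bound $|\Pi|\gtrsim\delta^{-t}$ for hyperplanes. Once these verifications are in place, the EIT inequality delivers the incidence bound directly; the strength of the generalized Radon transform framework is precisely that, under the Frostman hypothesis with exponents beyond $(d+1)/2$, the $\epsilon$-incidence measure bound $\lesssim\epsilon$ is automatic and sharp.
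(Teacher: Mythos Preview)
Your proposal is correct and follows essentially the same route as the paper: construct Frostman probability measures from the discrete sets $P$ and $\Pi$ (via mollification at scale $\delta$), verify the Frostman condition at all scales using the consequence $|P|\gtrsim\delta^{-s}$ of the $(\delta,s,C_P)$ hypothesis, and then invoke the Eswarathasan--Iosevich--Taylor estimate with the $(d-1)/2$-order Sobolev smoothing of the Radon transform. The one place where the paper is more explicit than you is the passage from hyperplanes to a measure on $\mathbb{R}^d$: rather than working on $S^{d-1}\times[-1,1]$, the paper dualizes each hyperplane $x_d=a_1x_1+\cdots+a_{d-1}x_{d-1}+a_d$ to the point $(a_1,\ldots,a_d)\in\mathbb{R}^d$, checks that this map is bilipschitz on bounded sets (so the $(\delta,t)$ structure is preserved), and then applies the EIT bound to the explicit function $\Psi(x,a)=a_1x_1+\cdots+a_{d-1}x_{d-1}-x_d+a_d$ after verifying the Phong--Stein curvature condition directly. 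This is exactly the local-coordinate reduction your parametrization would require, so the two arguments coincide once that bookkeeping is filled in.
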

In the above theorem, the conditions $s > \frac{d+1}{2}$ and $t > \frac{d+1}{2}$ are required in the proof. When $s, t$ are small, using an elementary geometric argument, we are able to prove the following non-trivial result. 
\begin{theorem}\label{CSbound}Let $C>0$, $0\le s, t\le d$. There exists $\delta_0=\delta_0(C, s, t)>0$ such that the following holds for $\delta\in (0, \delta_0)$. 
    Let $P$ be a $(\delta,s,C_P)$ set of points in $B(0, 1)\subset \mathbb{R}^d$ and $\Pi$ be a $(\delta,t,C_\Pi)$ set of hyperplanes intersecting $B(0, 1)$ in $\mathbb{R}^d$ with $d \geq 3$. We further assume that $s-d+2 >0$. Then, for any $\epsilon >0$, we have
    \[I_{C\delta}(P,\Pi)\lesssim  |P|\cdot|\Pi|\cdot \delta^{f(t)(s-d+2) - \epsilon},\] where  \[
        f(t) =
        \begin{cases}
        1/2,  ~&\text{ if } t\ge 1\\
        \dfrac{t}{1+t}, ~&\text{ if } t<1
        \end{cases}.\]
\end{theorem}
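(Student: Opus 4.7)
The approach is a Cauchy--Schwarz argument on the point side, combined with a dyadic decomposition of pairs of hyperplanes by the angle between them; the case $t < 1$ additionally requires a two-dimensional reduction so that Theorem~\ref{twodimension} can be invoked.

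First, apply Cauchy--Schwarz:
\[
I_{C\delta}(P,\Pi)^{2} \;\le\; |P|\sum_{\pi_1,\pi_2\in\Pi}\bigl|P\cap \pi_1(C\delta)\cap \pi_2(C\delta)\bigr|.
\]
For a pair of hyperplanes $\pi_1,\pi_2$ with angular separation $\alpha\in[\delta,1]$ between their unit normals, the intersection $\pi_1(C\delta)\cap \pi_2(C\delta)$ is contained in an $O(\delta/\alpha)$-neighborhood of the codimension-$2$ affine subspace $L:=\pi_1\cap\pi_2$. Covering this neighborhood by balls of radius $\delta/\alpha$ (of which $O((\alpha/\delta)^{d-2})$ suffice inside $B(0,1)$) and applying the $(\delta,s,C_P)$-set property to each ball yields
\[
\bigl|P\cap \pi_1(C\delta)\cap \pi_2(C\delta)\bigr|\;\lesssim\;\Bigl(\frac{\delta}{\alpha}\Bigr)^{s-d+2}|P|,
\]
which is nontrivial precisely because $s-d+2>0$.

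Next, decompose the sum over $(\pi_1,\pi_2)$ dyadically by $\alpha\sim 2^{-k}$, $0\le k\lesssim \log(1/\delta)$. For $t\ge 1$, covering an angular ``cylinder'' of radius $\alpha$ in the parameter space $S^{d-1}\times[-1,1]$ by $O(1/\alpha)$ balls of radius $\alpha$ and applying the $(\delta,t,C_\Pi)$-set property shows that the number of $\pi_2$ at angular distance $\sim \alpha$ from a fixed $\pi_1$ is $\lesssim \alpha^{t-1}|\Pi|$, so the pair count at scale $\alpha$ is $\lesssim \alpha^{t-1}|\Pi|^2$. Inserting into Cauchy--Schwarz and summing the dyadic geometric series (optimizing the cutoff, with a loss of $\delta^{-\epsilon}$ absorbed into the final exponent) gives
\[
I_{C\delta}(P,\Pi)^{2}\;\lesssim\; |P|^2|\Pi|^2\, \delta^{s-d+2-O(\epsilon)},
\]
hence $I_{C\delta}(P,\Pi)\lesssim |P||\Pi|\delta^{(s-d+2)/2-\epsilon}$, matching $f(t)=1/2$.

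For $t<1$ the $(\delta,t)$-set pair count degenerates to the trivial $|\Pi|^2$, and one must extract the extra $\delta^{t/(1+t)}$ factor from the structure of $\Pi$. Here I would invoke Theorem~\ref{twodimension}. For each codimension-$2$ affine subspace $L$ (taken from a $\delta/\alpha$-net of such subspaces at the scale fixed by the dyadic decomposition) and each $2$-dimensional transversal $V$ to $L$: points of $P$ in an $O(\delta/\alpha)$-neighborhood of $L$ project to a $(\delta,s-d+2,O(1))$-set in $V$ (this is exactly the content of the tube estimate above), while the hyperplanes of $\Pi$ passing near $L$ correspond, upon intersection with $V$, to a $(\delta,t,O(1))$-set of lines in $V$ (the $1$-parameter pencil through $L$ inherits the transverse $(\delta,t)$-structure of $\Pi$). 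Applying the 2D bound of Theorem~\ref{twodimension} in the regime where the effective dimensions satisfy $(s-d+2,t)$ with $t\le 1$, the exponent $\tfrac{t(s-d+2)}{1+t}=f(t)(s-d+2)$ appears as the contribution of each transversal; a Fubini-type averaging over the family of codimension-$2$ subspaces then recovers the total incidence bound.

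The main obstacle is the $t<1$ case: carefully verifying the inherited $(\delta,\cdot)$-set structure of the projected point set and of the planar lines (with constants degrading by at most $\delta^{-\epsilon}$), and performing the averaging over codimension-$2$ subspaces without double counting. A secondary technicality is the boundary regime $\alpha\lesssim \delta$ of nearly-parallel hyperplane pairs, where the $\delta$-separation assumption on $\Pi$ must be used directly to control their contribution to the Cauchy--Schwarz sum.
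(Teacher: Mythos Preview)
Your Cauchy--Schwarz opening matches the paper's $x=y$ case of its H\"older argument, but the $t\ge 1$ computation has a genuine gap. With your per-pair estimate $(\delta/\alpha)^{s-d+2}|P|$ and your pair count $\alpha^{t-1}|\Pi|^2$, the dyadic sum is $\delta^{s-d+2}\sum_\alpha \alpha^{t-s+d-3}$, which only yields $\delta^{s-d+2}$ when $t\ge s-d+3$. For $1\le t<s-d+3$ (always nonempty here since $s>d-2$) the sum is dominated by $\alpha\sim\delta$ and you obtain only $I\lesssim |P||\Pi|\,\delta^{(t-1)/2}$; for instance $d=3$, $s=2$, $t=1$ gives a trivial bound instead of $\delta^{1/2}$. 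Even tightening the pair count to $\alpha^t|\Pi|^2$ (by observing that only pairs with offset $\lesssim\alpha$ can meet inside $B(0,1)$) still leaves a gap when $s>d-1$. The missing idea is the $\delta$-separation of $P$: the paper covers $\pi(C\delta)\cap\pi'(C\delta)$ not by $(\alpha/\delta)^{d-2}$ balls of radius $\delta/\alpha$ but by $\delta^{-(d-2)}$ \emph{anisotropic} boxes of dimensions $(\delta/w)\times\delta^{d-1}$ (Lemma~\ref{4.1}, via Hera--Keleti--M\'ath\'e); each such thin box holds at most $\sim 1/w$ points of a $\delta$-separated set, giving the per-pair bound $\delta^{-(d-2)}/w$ and a dyadic sum $\sum_w w^{t-1}$ that converges for all $t>1$.

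Your $t<1$ plan is unrelated to the paper's and, as sketched, does not go through. The paper never reduces to $\mathbb{R}^2$ or invokes Theorem~\ref{twodimension}; it simply replaces Cauchy--Schwarz by H\"older with exponent $1+x/y$ on the point side, runs the identical anisotropic box computation, and then chooses $y=tx+\epsilon$, which produces the exponent $\frac{t}{1+t}(s-d+2)$ directly. Your transversal idea has a concrete obstruction: the tube bound gives $|P\cap L(r)|\lesssim r^{s-d+2}|P|$, but the $(\delta,s-d+2)$-set condition on the projected slice $P_L$ would require $\lesssim r^{s-d+2}|P_L|$, and $|P_L|$ can be far smaller than $|P|$. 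Thus the projection need not be a $(\delta,s-d+2)$-set, Theorem~\ref{twodimension} does not apply with the claimed parameters, and even if it did, the Fubini averaging over codimension-$2$ subspaces is not set up in a way that avoids the same loss.
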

\begin{remark}
    If $t-d+2 > 0$, then we can use the dual arguments to obtain a similar result. While Theorem \ref{main-theorem} is optimal, we do not have any constructions on the sharpness of this theorem. 
\end{remark}

\paragraph{Main ideas and Comparisons:} We first discuss the main idea in the proof of Theorem \ref{twodimension}. Observe that if $P$ is a $(\delta, s, C)$-set, then $P$ can be covered by at most $|P|\delta^{s-O(\epsilon)}$ Katz-Tao $(\delta, s)$-sets. A detailed proof can be found in \cite[Lemma 3.5]{motmot}. With this observation, one can apply Theorem 1.4 and Theorem 1.5 due to Fu and Ren in \cite{FuRen}. To prove these two theorems, Fu and Ren used a geometric argument and some earlier results due to Guth, Solomon, and Wang \cite{GSW} and a generalization due to Bradshaw \cite{PB}. 

In higher dimensions $d\ge 3$, to prove Theorem \ref{main-theorem}, we use a completely different approach. More precisely, we use a measure theoretic result due to Eswarathansan, Iosevich, and Taylor \cite{Alex}, which was proved by using Sobolev bounds for generalized Radon transforms. The proof of Theorem \ref{CSbound} is a combination of Cauchy-Schwartz  argument and geometric results due to Hera, Keleti, and Mathe \cite{HKM}. 

We also want to add a remark that the approach in the proof of Theorem \ref{main-theorem} is similar to the mechanism introduced by Iosevich, Jorati, and Laba \cite{III} when they studied incidences between a set of ``tubes" and a homogeneous set of points with the same size. Since our proof uses Sobolev bounds for generalized Radon transforms, it can be generalized to a more general form, i.e. the equation of hyperplanes $x_d=a_1x_1+\cdots+a_{d-1}x_{d-1}+a_d$ can be replaced by $\Psi(a_1, \ldots, a_d, x_1, \ldots, x_d)=0$, where the function $\Psi$ satisfies the Phong-Stein curvature condition (\ref{P-S-C}) below.

If the set $\Pi$ of planes  only needs to satisfy the property that it is $\delta$-separated, a recent work of Dabrowski, Orponen, and Villa \cite{DOV} for the case of $(d-1)$-hyperplanes tells us that 
\begin{equation}\label{eq:DOV}I_{C\delta}(P, \Pi)\lesssim \delta^{-\epsilon}\delta^{\frac{(d-1)(s+1-d)}{2d-1-s}}|P||\Pi|^{\frac{d-1}{2d-1-s}},\end{equation}
where $P$ is a $\delta$-separated $(\delta, s, C_P)$-set with $s>1$.

This result is weaker than Theorem \ref{main-theorem} when $|\Pi|\le \delta^{-d}$. 

We now compare the estimate (\ref{eq:DOV}) and Theorem \ref{CSbound}. Theorem \ref{CSbound} states that if $s-d+2>0$, then 
\[I_{C\delta}(P,\Pi)\lesssim  |P|\cdot|\Pi|\cdot \delta^{f(t)(s-d+2) - \epsilon},\] where  \[
        f(t) =
        \begin{cases}
        1/2,  ~&\text{ if } t\ge 1\\
        \dfrac{t}{1+t}, ~&\text{ if } t<1
        \end{cases}.\]
Assume $t\ge 1$, then by a direct computation, this bound is stronger than that of (\ref{eq:DOV}) when 
\[\delta^{-t}\lesssim |\Pi|\le \delta^{\frac{M(2d-1-s)}{d-s}},\]
where 
\[M=\frac{(d-1)(s+1-d)}{2d-1-s}-\frac{s-d+2}{2}.\]
This range is non-empty when $2t\le s+1<d$.

Assume $t<1$, then Theorem \ref{CSbound} is stronger than that of (\ref{eq:DOV}) when 
\[\delta^{-t}\lesssim |\Pi|\le \delta^{\frac{M'(2d-1-s)}{d-s}},\]
where 
\[M'=M-\left(\frac{t}{t+1}-\frac{1}{2}\right)(s-d+2).\]
This range is non-empty when $ d-2<s<d-1$.

\section{Proof of Theorem \ref{main-theorem}}
We first recall some notations that can be found in \cite{DOV}. Let $\mathcal{A}(d, n)$ be the set of $n$-dimensional affine subspaces in $\mathbb{R}^d$. The metric $d_{\mathcal{A}}$ defined on $\mathcal{A}(d, n)$ (\cite[page 53]{M95}) is defined by 
\[d_{\mathcal{A}}(V, W)=||\pi_{V_0}-\pi_{W_0}||_{op}+|a-b|,\]
where $V=V_0+a$ and $W=W_0+b$, $V_0, W_0\in \mathcal{G}(d, n)$ (the set of $n$-dimensional subspaces), $a\in V_0^\perp$, $b\in W_0^\perp$, and $||\cdot ||_{op}$ is the operator norm.

Let $\gamma_1$ and $\gamma_2$ be two $(d-1)$-planes defined by 
\[\gamma_1\colon x_d=a_1x_1+\cdots+a_{d-1}x_{d_1}+a_d,\]
and 
\[\gamma_2\colon x_d=b_1x_1+\cdots+b_{d-1}x_{d_1}+b_d.\]
The following formula is more useful in practise
\[d_\mathcal{A}\left(\gamma_1,\gamma_2\right) = \left|\dfrac{(a_1\ldots,a_{d-1},-1)}{\left|(a_1\ldots,a_{d-1},-1\right|}-\dfrac{(b_1\ldots,b_{d-1},-1)}{\left|(b_1\ldots,b_{d-1},-1\right|}\right|+\left|\dfrac{a_d}{\left|(a_1\ldots,a_{d-1},-1\right|}-\dfrac{b_d}{\left|(b_1\ldots,b_{d-1},-1)\right|}\right|.\]
This can be proved by a direct computation or can be found in \cite[Lemma 2.5]{HPab}. 

For each plane defined by $x_d=a_1x_1+\cdots+a_{d-1}x_{d-1}+a_d$, we call the point $(a_1, \ldots, a_d)$ its dual point in $\mathbb{R}^d$. 

Let $D\colon \mathbb{R}^d\to \mathcal{A}(d, d-1)$ be the map defined by 

\[D \colon (x_1,\ldots,x_d) \mapsto \left\lbrace y_d=\sum_{i=1}^{d-1} x_iy_i+x_d\right\rbrace.\]
There are some properties of $D$ we should keep in mind, namely, the restriction of $D$ to the ball $B(R)$, $0<R<\infty$, is bilipschitz onto its image, and the bilipschitz constant depends only on $R$ and $d$. Thus, $D$ is injective. The inverse map of $D$, denoted by $D^*\colon \mathtt{im}(D)\to \mathbb{R}^d$, defined by 
\[D(x_1, \ldots, x_d)\mapsto (-x_1, \ldots, -x_{d-1}, x_d).\]
There exists a dimensional constant $0<r_d<1$ such that the restriction of $D^*$ to $B(V_0, r_d)$ is bilipschitz onto its image, here $V_0=D(0, \ldots, 0)$. 

With these notations, the next two lemmas can be proved with standard arguments. 
\begin{lemma}\label{lm1}
    Let $S$ be a set of $\delta$-separated hyperplanes intersecting the unit ball $B(0, 1)$. Then the set of corresponding dual points is also $c\delta$-separated for some absolute constant $c$.
\end{lemma}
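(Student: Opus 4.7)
The plan is to derive the stronger quantitative estimate $d_{\mathcal{A}}(\gamma_1,\gamma_2)\le C\,|a-b|$ directly from the explicit formula given just above the lemma, where $a=(a_1,\ldots,a_d)$ and $b=(b_1,\ldots,b_d)$ are the dual points of $\gamma_1$ and $\gamma_2$. The contrapositive then says that a $\delta$-lower bound on $d_{\mathcal{A}}(\gamma_1,\gamma_2)$ forces $|a-b|\ge\delta/C$, giving the desired absolute constant $c=1/C$. Since we are only comparing pairs of hyperplanes, we can reduce to the affine chart of hyperplanes of the form $x_d=a_1 x_1+\cdots+a_{d-1}x_{d-1}+a_d$ on which the formula applies.

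Write $v_1=(a_1,\ldots,a_{d-1},-1)$ and $v_2=(b_1,\ldots,b_{d-1},-1)$, so $|v_1|,|v_2|\ge 1$ for free. The decisive geometric input is the intersection hypothesis: the distance from the origin to $\gamma_1$ is exactly $|a_d|/|v_1|$, and since $\gamma_1\cap B(0,1)\ne\emptyset$ this is at most $1$. Thus
\[
|a_d|\le |v_1|,\qquad |b_d|\le |v_2|,
\]
which is what prevents the otherwise unbounded parameters $a_d,b_d$ from ruining the comparison.

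Now estimate the two summands in the formula for $d_{\mathcal{A}}(\gamma_1,\gamma_2)$ separately. For the direction term, the standard inequality $\bigl|\tfrac{u}{|u|}-\tfrac{w}{|w|}\bigr|\le 2|u-w|/\min(|u|,|w|)$, combined with $|v_1|,|v_2|\ge 1$, yields
\[
\left|\frac{v_1}{|v_1|}-\frac{v_2}{|v_2|}\right|\le 2|v_1-v_2|\le 2|a-b|.
\]
For the offset term, split
\[
\left|\frac{a_d}{|v_1|}-\frac{b_d}{|v_2|}\right|\le \frac{|a_d|\,\bigl||v_2|-|v_1|\bigr|}{|v_1|\,|v_2|}+\frac{|a_d-b_d|}{|v_2|}\le \frac{|a_d|}{|v_1|}\cdot\frac{|v_1-v_2|}{|v_2|}+|a_d-b_d|\le 2|a-b|,
\]
using $|a_d|/|v_1|\le 1$, $|v_2|\ge 1$, and $\bigl||v_1|-|v_2|\bigr|\le|v_1-v_2|$. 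Summing gives $d_{\mathcal{A}}(\gamma_1,\gamma_2)\le 4|a-b|$, so $c=1/4$ works, an absolute constant as required.

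The only substantive point, and what I'd flag as the only potential obstacle, is recognizing that the intersection condition $\gamma_i\cap B(0,1)\ne\emptyset$ supplies precisely the bound $|a_d|\le |v_1|$ needed to absorb the dangerous factor $|a_d|$ in the offset summand; without this the dual map is not Lipschitz (indeed it is easy to arrange $|a-b|$ large while $d_{\mathcal{A}}(\gamma_1,\gamma_2)$ is tiny when the planes are nearly parallel and far from the origin). Once that observation is in place, the rest is routine algebra with no dependence on the ambient dimension $d$.
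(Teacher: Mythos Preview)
Your proof is correct and follows essentially the same route as the paper's: both establish the Lipschitz estimate $d_{\mathcal{A}}(\gamma_1,\gamma_2)\lesssim |a-b|$ by bounding the direction and offset summands of the explicit formula separately, and both identify the intersection hypothesis $\gamma_i\cap B(0,1)\ne\emptyset$ (equivalently $|a_d|/|v_1|\le 1$) as the key input for the offset term. The only cosmetic difference is that the paper obtains the sharper constant $1$ for the direction term via a direct squaring argument, whereas you invoke the standard inequality with constant $2$; this does not affect the conclusion.
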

\begin{proof}
If two hyperplanes $\gamma_1$ and $\gamma_2$ are $\delta$-separated with the metric
\begin{align*}
d_\mathcal{A}\left(\gamma_1,\gamma_2\right) = \left|\dfrac{(a_1\ldots,a_{d-1},-1)}{\left|(a_1\ldots,a_{d-1},-1\right|}-\dfrac{(b_1\ldots,b_{d-1},-1)}{\left|(b_1\ldots,b_{d-1},-1\right|}\right|+\left|\dfrac{a_d}{\left|(a_1\ldots,a_{d-1},-1\right|}-\dfrac{b_d}{\left|(b_1\ldots,b_{d-1},-1)\right|}\right|,
\end{align*}
where $\gamma_1$ and $\gamma_2$ are respectively defined by equations
  \[(\gamma_1): y_d=\sum_{i=1}^{d-1}a_iy_i+a_d,\]
    and 
\[(\gamma_2): y_d=\sum_{i=1}^{d-1}b_iy_i+b_d,\]
then we want to prove that the usual Euclidean distance between two points $(a_1, \ldots, a_d)$ and $(b_1, \ldots, b_d)$ is at least $c\delta$ for some absolute constant $c$. 
    For the sake of simplicity, we will denote $u=(a_1\ldots,a_{d-1},-1)$ and $v =(b_1\ldots,b_{d-1},-1) $. Then clearly $|u|,|v| \ge 1$. Since two hyperplanes $\gamma_1, \gamma_2$ are $\delta$-separated, we have that $d_\mathcal{A}\left(\gamma_1,\gamma_2\right) \ge \delta$. With these assumptions, we attempt to prove that 
    \[ \sqrt{\sum_{i=1}^d (a_i-b_i)^2} \gtrsim \delta\]
    It should be noted that, if we view $a_d,b_d$ as vectors $(0,\ldots,a_d), (0,\ldots,b_d)$ respectively, then the usual Euclidean distance between two points $(a_1, \ldots, a_d)$ and $(b_1, \ldots, b_d)$ can be rewritten as $\sqrt{|u-v|^2+|a_d-b_d|^2}$.

Thus, the goal is to prove the following inequality
\begin{align}\label{inq}
    \left|\dfrac{u}{|u|}-\dfrac{v}{|v|}\right|+\left|\dfrac{a_d}{|u|}-\dfrac{b_d}{|v|}\right|\lesssim  \sqrt{|u-v|^2+|a_d-b_d|^2}.
\end{align}

Instead of proving \ref{inq}, we prove a stronger version
\begin{align}
    \left|\dfrac{u}{|u|}-\dfrac{v}{|v|}\right|+\left|\dfrac{a_d}{|u|}-\dfrac{b_d}{|v|}\right|\lesssim |u-v| +|a_d-b_d|.
\end{align}\label{inq1}
We first prove 
\[\left|\dfrac{u}{|u|}-\dfrac{v}{|v|}\right| \le |u-v|. \]
Squaring both sides gives
\[|u|^2+|v|^2-2u\cdot v\ge 2-\dfrac{2u\cdot v}{|u||v|}.\] 
This is reduced to
\[2|u||v|-2u\cdot v \ge 2-\dfrac{2u\cdot v}{|u||v|}.\]
We denote $|u||v|=x, u\cdot v=y$, then the above can be represented as
\begin{align*}
           (x-1)(x-y) \ge 0.
\end{align*}
This inequality is true since $|u|,|v| \ge 1$ and $x \ge y$ by the Cauchy-Schwarz inequality.

Now we estimate $\left|\frac{a_d}{|u|}-\frac{b_d}{|v|}\right|$. Since all hyperplanes intersect $B(0, 1)$, one has $\frac{|a_d|}{|u|}$ and $\frac{|b_d|}{|v|}$, i.e the distances from the origin to the hyperplanes $\gamma_1,\gamma_2$, are not larger than 1. Then
\[\left|\dfrac{a_d}{|u|}-\dfrac{b_d}{|v|}\right| = \left|\dfrac{a_d|v|-a_d|u|+a_d|u|-b_d|u|}{|u||v|}\right| \le |u-v| +|a_d-b_d|.\]
This completes the proof.
\end{proof}
\begin{lemma}\label{lm2}
    Let $\Pi$ be a set of $\delta$-separated $(\delta, t, C_{\pi})$-hyperplanes intersecting $B(0, 1)$. Then the set of dual points is $c\delta$-separated $(\delta, t, c')$-set for some absolute constants $c, c'>0$.
\end{lemma}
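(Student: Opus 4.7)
The plan is to transfer the non-concentration property from $\Pi$ (with the $d_{\mathcal{A}}$ metric) to its dual point set $D^*(\Pi)$ (with the Euclidean metric) via the one-sided Lipschitz relation that is already packaged in the proof of Lemma \ref{lm1}. Indeed, that calculation establishes the inequality
\[
d_{\mathcal{A}}(\gamma_1, \gamma_2) \;\lesssim\; |u - v| + |a_d - b_d| \;\lesssim\; \bigl|D^*(\gamma_1) - D^*(\gamma_2)\bigr|,
\]
valid whenever $\gamma_1$ and $\gamma_2$ both meet $B(0,1)$, since it uses only $|u|,|v| \geq 1$ and $|a_d|/|u|, |b_d|/|v| \leq 1$. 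The separation assertion of Lemma \ref{lm2} is then immediate from Lemma \ref{lm1}, so what remains is to verify the covering condition.

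Fix a Euclidean ball $B(x, r) \subset \mathbb{R}^d$ with $\delta \leq r \leq 1$. By the Lipschitz bound above, the set of $\gamma \in \Pi$ whose dual point lies in $B(x, r)$ has $d_{\mathcal{A}}$-diameter at most $C_1 r$, so it is contained in some $d_{\mathcal{A}}$-ball $B_{\mathcal{A}}$ of radius $C_1 r$. Applying the $(\delta, t, C_\pi)$-set hypothesis on $\Pi$ to $B_{\mathcal{A}}$ gives
\[
|\Pi \cap B_{\mathcal{A}}|_\delta \;\lesssim\; r^t \, |\Pi|_\delta.
\]
Because both $\Pi$ (in $(\mathcal{A}(d, d-1), d_{\mathcal{A}})$) and $D^*(\Pi)$ (in $\mathbb{R}^d$) are $\delta$-separated subsets of doubling metric spaces, and $D^*$ is a bijection between them, the comparisons $|\Pi|_\delta \sim |\Pi| = |D^*(\Pi)| \sim |D^*(\Pi)|_\delta$ hold, with analogous comparisons for the intersections with the two balls. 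Chaining these with the previous display yields $|D^*(\Pi) \cap B(x, r)|_\delta \lesssim r^t |D^*(\Pi)|_\delta$, which is the desired $(\delta, t, c')$-set condition for an absolute $c' > 0$.

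I do not anticipate a significant obstacle: the Lipschitz ingredient is essentially extracted from the proof of Lemma \ref{lm1}, and the remaining step is the standard comparison between cardinalities and $\delta$-covering numbers for $\delta$-separated sets in doubling metric spaces. The only delicate point is to verify that all implicit constants are absolute and, in particular, that the Lipschitz bound does not degenerate when some hyperplane in $\Pi$ is nearly parallel to the $x_d$-axis; in the chart above this corresponds to large $|u|$, and the key step of Lemma \ref{lm1} (the inequality $(x-1)(x-y) \geq 0$ with $x = |u||v|$, $y = u \cdot v$) goes through irrespective of how large $|u|$ and $|v|$ become, so nothing has to change.
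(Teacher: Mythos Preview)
Your proposal is correct and follows essentially the same approach as the paper: transfer the non-concentration condition from $\Pi$ to $\Pi^*$ via the Lipschitz relation between $d_{\mathcal{A}}$ and the Euclidean metric, then compare cardinalities with $\delta$-covering numbers using separation. The only difference is that the paper invokes the bilipschitz property of $D$ on bounded balls (with constant $K_D$), whereas you extract directly from the proof of Lemma~\ref{lm1} the one-sided bound $d_{\mathcal{A}}\lesssim|\,\cdot\,|_{\mathrm{Euc}}$, which holds with an absolute constant regardless of how large the dual coordinates are; this is a slightly cleaner way to see that $c'$ is genuinely absolute.
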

\begin{proof}
     We denote $\Pi^*$ the set of dual points of hyperplanes $\Pi$. It is immediate that the set $\Pi^*$ is $c\delta$-separated by Lemma \ref{lm1}. Hence, we only need to prove that, for arbitrary ball $B(x,r)$, we have
    \[|\Pi^*\cap B(x, r)|_\delta \le c' r^t|\Pi^*|_\delta, \delta\le r\le 1,\]
    for some constant $c'$ that will be chosen later.  It is sufficient to prove that 
    \[|\left\lbrace  x' \in \Pi^*: d(x',x) \le r\right\rbrace| \le c'r^t|\Pi^*|_{\delta}, \delta\le r\le 1.\]
As mentioned earlier, the map $D$ is bilipschitz onto its image, we have $|\left\lbrace  x' \in \Pi^*: d(x',x) \le r\right\rbrace|$ is at most
    \[\left|\left\lbrace  D(x')\in \Pi : d_\mathcal{A}(D(x'),D(x)) \le K_Dr\right\rbrace\right| \le C_\pi K_D^t r^t|\Pi^*|_\delta, \delta\le r\le 1.\]
    Choose $c'=C_\pi K_D^{t}$, then the lemma follows. Note that $K_D^t$ can be replaced by some constant that does not depend on $t$ since $t\in (0, d)$. 
    \end{proof}
\subsection{Sobolev bounds for generalized Radon transforms and consequences}

In this section, we recall some known results that make use the boundedness of general Radon transforms. Let $g\colon \mathbb{R}^d\to \mathbb{R}$ be a Schwartz function, $t\in \mathbb{R}$, $\psi\colon \mathbb R^d \times \mathbb R^d \rightarrow \mathbb R$ be a smooth cut-off function, and   $ \Psi(\mathbf{x}, \mathbf{y}) : \mathbb R^d \times \mathbb R^d \rightarrow \mathbb R$ be a  smooth function with some suitable assumptions. We define
\[T_{\Psi_t}g(\mathbf{x}):=\int_{\{\Psi(\mathbf{x}, \mathbf{y})=t\}}g(\mathbf{y})\psi(\mathbf{x}, \mathbf{y})d\sigma_{\mathbf{x}, t}(\mathbf{y}),\]
where $d\sigma_{\mathbf{x}, t}$ is the Lebesgue measure on the set $\{\mathbf{y}\colon \Psi(\mathbf{x}, \mathbf{y})=t\}.$

We denote the usual $L^2$-Sobolev space of $L^2$ functions with $s$ generalized derivatives in $L^2(\mathbb{R}^d)$ by $L^2_s(\mathbb{R}^d)$. The following theorem was proved by Eswarathasan, Iosevich and Taylor in \cite{Alex}.
\begin{theorem} [\cite{Alex}, Proposition 2.2]
Let $E\subset \mathbb{R}^d$ be a compact set with $\dim_H(E)=\alpha$, and $\mu$ be the corresponding Frostman measure on $E$. Assume that $T_{\Psi_t}$ maps $L^2$ to $L^2_s$ with constants uniform in a small neighborhood of $t$, and $ d-s < \alpha < d$. Then we have
$$\mu \times \mu \{(\mathbf{x}, \mathbf{y}) \in E \times E : t \leq |\Psi(\mathbf{x},\mathbf{y})| \leq t+ \epsilon\} \lesssim \epsilon.$$

\end{theorem}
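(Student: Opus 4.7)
The plan is to view the measure of the thin slab as a smoothed density of the pushforward of $\mu\times\mu$ under $\Psi$, and to bound it by combining the Sobolev mapping property of $T_{\Psi_t}$ with a Frostman-type energy estimate for $\mu$. First I would regularize: pick a standard bump $\rho\in C_c^\infty(\mathbb{R}^d)$ with $\int\rho=1$, set $\rho_\delta(\mathbf{x})=\delta^{-d}\rho(\mathbf{x}/\delta)$ and $\mu_\delta:=\mu\ast\rho_\delta$. From the Frostman bound $\mu(B(\mathbf{x},r))\lesssim r^\alpha$ (available since $\dim_H E=\alpha$), a standard Fourier-side computation gives
\[\|\mu_\delta\|_{L^2_{-\sigma}}^2=\int|\hat\mu(\xi)|^2|\hat\rho(\delta\xi)|^2(1+|\xi|^2)^{-\sigma}\,d\xi\;\lesssim\;1\]
uniformly in $\delta$, whenever $2\sigma>d-\alpha$; this is just the statement that the Riesz energy $I_{d-2\sigma}(\mu)$ is finite under the Frostman hypothesis. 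The assumption $\alpha>d-s$ is precisely what we need in order to take $\sigma=s/2$ below.

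Second, I would pick a smooth bump $\chi\in C_c^\infty([-1,2])$ with $\chi\geq\mathbf{1}_{[0,1]}$ and set $\chi_\epsilon(u):=\chi((u-t)/\epsilon)$. By Fubini together with the definition of $T_{\Psi_u}$,
\[\iint\chi_\epsilon(\Psi(\mathbf{x},\mathbf{y}))\,\mu_\delta(\mathbf{x})\mu_\delta(\mathbf{y})\,d\mathbf{x}\,d\mathbf{y}=\int\chi_\epsilon(u)\,\langle T_{\Psi_u}\mu_\delta,\mu_\delta\rangle\,du.\]
Weak-$\ast$ convergence $\mu_\delta\times\mu_\delta\to\mu\times\mu$ then shows that, as $\delta\to 0$, the left side is at least $(\mu\times\mu)\{(\mathbf{x},\mathbf{y}):\Psi(\mathbf{x},\mathbf{y})\in[t,t+\epsilon]\}$; the negative slab $\{-t-\epsilon\leq\Psi\leq-t\}$ is handled identically and summed at the end.

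The core step is a uniform bound $\langle T_{\Psi_u}\mu_\delta,\mu_\delta\rangle\lesssim 1$ for $u$ in a small neighborhood of $t$ and for all $\delta>0$. Starting from the hypothesis $T_{\Psi_u}\colon L^2\to L^2_s$, duality yields $T_{\Psi_u}^{\ast}\colon L^2_{-s}\to L^2$; since $T_{\Psi_u}$ is a generalized Radon transform satisfying the Phong--Stein curvature condition referenced earlier, its adjoint smooths by the same order, and complex interpolation between these two endpoint bounds produces the symmetric Sobolev estimate $T_{\Psi_u}\colon L^2_{-s/2}\to L^2_{s/2}$, uniformly for $u$ near $t$. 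Pairing in Sobolev duality,
\[\langle T_{\Psi_u}\mu_\delta,\mu_\delta\rangle\leq\|T_{\Psi_u}\mu_\delta\|_{L^2_{s/2}}\,\|\mu_\delta\|_{L^2_{-s/2}}\lesssim\|\mu_\delta\|_{L^2_{-s/2}}^2\lesssim 1,\]
where the last inequality is the Frostman energy bound of the first step with $\sigma=s/2$. Plugging this into the integral identity together with $\int\chi_\epsilon\lesssim\epsilon$ gives $(\mu\times\mu)\{|\Psi|\in[t,t+\epsilon]\}\lesssim\epsilon$ after passing to the limit $\delta\to 0$ and adding the symmetric slab.

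The main obstacle is the interpolation step: the bare hypothesis $T_{\Psi_u}\colon L^2\to L^2_s$ is not by itself strong enough, since the naive asymmetric pairing $\|T_{\Psi_u}\mu_\delta\|_{L^2_s}\|\mu_\delta\|_{L^2_{-s}}\lesssim\|\mu_\delta\|_{L^2}\|\mu_\delta\|_{L^2_{-s}}$ blows up like $\delta^{(\alpha-d)/2}$ as $\delta\to 0$ and provides no $\epsilon$-linear density bound. One really must use the Fourier-integral-operator / Phong--Stein structure of $T_{\Psi_u}$ to upgrade the one-sided smoothing $L^2\to L^2_s$ to the symmetric bound $L^2_{-s/2}\to L^2_{s/2}$ that feeds the energy argument; this is exactly where the Sobolev theory for generalized Radon transforms, invoked by Eswarathasan--Iosevich--Taylor, enters the proof.
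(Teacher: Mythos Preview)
The paper does not actually supply a proof of this theorem: it is quoted verbatim as Proposition~2.2 of Eswarathasan--Iosevich--Taylor \cite{Alex} and used as a black box, with only a remark afterward noting that the same argument extends to two different sets $E,F$ and two Frostman measures. So there is no ``paper's own proof'' to compare against.

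That said, your sketch is the standard EIT argument and is essentially correct. The regularization $\mu_\delta=\mu\ast\rho_\delta$, the Frostman energy bound $\|\mu_\delta\|_{L^2_{-s/2}}\lesssim 1$ under $\alpha>d-s$, the coarea/Fubini identity expressing the slab measure as $\int\chi_\epsilon(u)\,\langle T_{\Psi_u}\mu_\delta,\mu_\delta\rangle\,du$, and the Sobolev duality pairing are exactly the ingredients in \cite{Alex}. You also correctly isolate the one genuine subtlety: the stated hypothesis $T_{\Psi_u}\colon L^2\to L^2_s$ is, by itself, not enough for the pairing bound --- you need the symmetric estimate $T_{\Psi_u}\colon L^2_{-s/2}\to L^2_{s/2}$, and obtaining this requires knowing that the adjoint $T_{\Psi_u}^\ast$ smooths by the same order (so that interpolation applies). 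In \cite{Alex} this is justified because $T_{\Psi_u}$ is a Fourier integral operator associated to a canonical relation that is a local canonical graph, which is symmetric in the two sets of variables; your appeal to the Phong--Stein structure is the right way to close that gap, though strictly speaking it imports more than the bare hypothesis of the theorem as written. One minor point: in the coarea identity there is a Jacobian factor $|\nabla_{\mathbf{y}}\Psi|^{-1}$ which is harmlessly absorbed into the cutoff $\psi$ in the definition of $T_{\Psi_u}$; you might make that explicit.
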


\begin{remark}\label{remarks}
It can be checked from Eswarathasan-Iosevich-Taylor's proof that the same result holds for two different sets $E \times F$, namely,
\begin{equation*}\mu_E \times \mu_F \{(\mathbf{x}, \mathbf{y}) \in E \times F : t \leq |\Psi(\mathbf{x},\mathbf{y})| \leq t+\epsilon\} \lesssim \epsilon,\end{equation*}
if  $T_{\Psi_t}$ maps $L^2$ to $L^2_s$ with $ d-s < \alpha, \beta < d$, where $\alpha=\dim_H(E)$ and $\beta=\dim_H(F)$. Notice also that the sets $E$ and $F$ are not required to be A-D regular. Moreover in the proofs, the only thing that is needed associated with the set $E$ is the existence of a probably measure $\mu$ supported on $E$ that satisfies $\mu(B(x, r)) \lesssim r^{\alpha}$, the same applies for $F$.
\end{remark}
To apply the above theorem in the proof of Theorem \ref{main-theorem}, we need to recall a celebrated result of Phong and Stein \cite{PhS} stating that the operator $T_{\Psi_t}$ is uniformly bounded from $L^2$ to $L^2_s$ on a small neighborhood of $t$ with $s= \frac{d-1}{2}$ if the so-called Phong-Stein rotational curvature condition 
\begin{equation}\label{P-S-C}\det \begin{pmatrix}
0 & \nabla_\mathbf{x}\Psi \\
-\nabla_\mathbf{y}\Psi & \frac{\partial^2 \Psi}{\partial x_i \partial y_j} 
\end{pmatrix} \ne 0\end{equation} 
holds on the set $\{(\mathbf{x}, \mathbf{y})\colon \Psi(\mathbf{x},\mathbf{y})=t\}$. This and Remark \ref{remarks} imply the following theorem. 
\begin{theorem}\label{2.1.1}
Let $\mu_E, \mu_F$ be probability measures on  compact sets $E, F \subset \mathbb R^d,$ respectively, satisfying 
\[\mu_E(B(x, r))\lesssim r^\alpha, ~\mu_F(B(x, r))\lesssim r^\beta,\]
for all $x\in \mathbb{R}^d$ and $r>0$. Suppose that the Phong-Stein rotational curvature condition \eqref{P-S-C} holds for the function $\Psi$, and $d-(d-1)/2 < \alpha, \beta <d.$ Then, for $\epsilon>0,$  we have
$$\mu_E \times \mu_F \{(\mathbf{x}, \mathbf{y}) \in E \times F : |\Psi(\mathbf{x},\mathbf{y})| \leq \epsilon\} \lesssim \epsilon.$$
\end{theorem}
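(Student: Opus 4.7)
The plan is to assemble Theorem 2.1.1 as an immediate corollary of the two ingredients already presented: the Phong--Stein Sobolev bound for generalized Radon transforms, and the Eswarathasan--Iosevich--Taylor measure-theoretic estimate together with its two-set extension in Remark \ref{remarks}. No new geometric or analytic input is required; the work is entirely in checking that the hypotheses of the EIT proposition are met, with the right value of the smoothing index $s$.

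First, I would invoke the Phong--Stein theorem. Since $\Psi$ satisfies the rotational curvature condition \eqref{P-S-C} on $\{\Psi=t\}$, the operator $T_{\Psi_t}$ is uniformly bounded from $L^2(\mathbb{R}^d)$ into $L^2_s(\mathbb{R}^d)$ for all $t'$ in a small neighborhood of $t$, with smoothing order $s=(d-1)/2$. This is exactly the hypothesis required in the EIT proposition (Proposition 2.2 of \cite{Alex}) recalled before Remark \ref{remarks}. Moreover, the Frostman-type ball conditions $\mu_E(B(x,r))\lesssim r^\alpha$ and $\mu_F(B(x,r))\lesssim r^\beta$ are precisely the only properties of $E,F$ that the proof of EIT uses (Remark \ref{remarks} explicitly records this), so A--D regularity is not needed.

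Second, I would apply the two-set version from Remark \ref{remarks} to the measures $\mu_E,\mu_F$ and the function $\Psi$. The dimensional threshold in EIT is $d-s<\alpha,\beta<d$, which with $s=(d-1)/2$ becomes $\tfrac{d+1}{2}<\alpha,\beta<d$; this is exactly the hypothesis of Theorem \ref{2.1.1}. Thus, for every $t\in\mathbb{R}$ in the relevant neighborhood,
\[
\mu_E\times\mu_F\bigl\{(\mathbf{x},\mathbf{y})\in E\times F\colon t\le \Psi(\mathbf{x},\mathbf{y})\le t+\epsilon\bigr\}\lesssim \epsilon,
\]
with constants uniform in $t$ on that neighborhood.

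Finally, to pass from a one-sided slab to the two-sided condition $|\Psi|\le\epsilon$, I would split
\[
\{(\mathbf{x},\mathbf{y})\colon |\Psi(\mathbf{x},\mathbf{y})|\le \epsilon\}=\{-\epsilon\le \Psi\le 0\}\cup \{0\le \Psi\le \epsilon\}
\]
and apply the displayed estimate once with $t=-\epsilon$ and once with $t=0$, both in a neighborhood of $t=0$ where Phong--Stein is valid. Summing the two bounds yields the claimed $\mu_E\times\mu_F\{|\Psi|\le \epsilon\}\lesssim \epsilon$. The only mild obstacle in this program is the bookkeeping around uniformity: one must make sure the Phong--Stein constants (and hence the EIT constants) are uniform on a neighborhood of $t=0$ that contains the full range $[-\epsilon,\epsilon]$ for $\epsilon$ sufficiently small. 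This is automatic from the smoothness of $\Psi$ and the openness of the rotational curvature condition, and is already built into the statement of Phong--Stein we are quoting.
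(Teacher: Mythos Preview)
Your proposal is correct and takes exactly the same approach as the paper: combine the Phong--Stein $L^2\to L^2_{(d-1)/2}$ bound with the two-set version of the Eswarathasan--Iosevich--Taylor estimate recorded in Remark~\ref{remarks}, and check that the threshold $d-s<\alpha,\beta<d$ becomes $(d+1)/2<\alpha,\beta<d$. The paper's proof is literally the sentence ``This and Remark~\ref{remarks} imply the following theorem''; your write-up just unpacks this, though your final slab-splitting step is unnecessary since the EIT statement in the paper already reads $\{t\le |\Psi|\le t+\epsilon\}$, so taking $t=0$ gives the two-sided bound directly.
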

\subsection{Proof of Theorem \ref{main-theorem}}
Let $\Pi^*$ be the set of dual points corresponding to planes in $\Pi$. As we proved in Lemma \ref{lm2} that this set is $c\delta$-separated $(\delta, t, c')$-set. 

We first define two probability measures on $P(\delta)$ and $\Pi^*(c\delta)$, denoted by $\mu_P$ and $\mu_{\Pi}$, respectively, as follows:
\[\mu_P(X)=\frac{ | X\cap P(\delta)|}{|P(\delta)|},\]
and 
\[\mu_{\Pi^*}(X)=\frac{|X\cap \Pi^*(c\delta)|}{|\Pi^*(c\delta)|}.\]
Since $P$ is $(\delta, s, C_P)$-set and $\Pi^*$ is $(c\delta, t, c')$-set, we have
\[\mu_P(B(x, r)) \lesssim r^s,\]
and 
\[\mu_{\Pi^*}(B(x, r))\lesssim r^{t},\]
for all $x\in \mathbb{R}^d$ and $r\ge \delta$. 

For $r\le \delta$, we have $|B(x, r)\cap P(\delta)|\lesssim r^d$. On the other hand, we have $ |P(\delta)|\gtrsim \delta^{d-s},$
which follows from the assumption that $P$ is a $(\delta, s, C_P)$-set. For $r\le \delta$ and $s<d$, we have $r^{d-s}\le \delta^{d-s}$, this means that 
\[\mu_P(B(x, r))\lesssim \frac{r^d}{\delta^{d-s}}\lesssim r^s.\]
The same holds for $\Pi^*$. In other words, the two probability measures $\mu_{P}$ and $\mu_{\Pi^*}$ are Frostman measures with exponents $s$ and $t$, respectively. 

To proceed further, we may assume that all hyperplanes are defined by the equation of the form 
\begin{equation}\label{star}a_1x_1+\cdots +a_{d-1}x_{d-1}+a_d=x_d.\end{equation}
This gives that the dual points are of the form $(a_1, \ldots, a_{d-1}, 1)$.
Define 
\[\Psi(x_1, \ldots, x_d, a_1, \ldots, a_d)=a_1x_1+\cdots+a_{d-1}x_{d-1}-x_d+a_d.\]
A direct computation shows that this function satisfies the curvature condition (\ref{P-S-C}). 
We now observe that $(x_1, \ldots, x_d)\in \pi(C\delta)$, where $\pi$ is defined by (\ref{star}), if $|\Psi(x_1, \ldots, x_d, a_1, \ldots, a_d)|\le C\delta$. On the other hand, if $|\Psi(x_1, \ldots, x_d, a_1, \ldots, a_d)|\le C\delta$, then $|\Psi(U,V)|\lesssim \delta$ for all $U\in B((x_1, \ldots, x_d), \delta)$ and $V\in B((a_1, \ldots, a_d), c\delta)$, when $\delta$ is small enough. This infers that
\[\frac{1}{|P||\Pi|}|I_{\delta}(P, \Pi)|\le \mu_P\times \mu_{\Pi^*}\left\lbrace (U, V)\in P(\delta)\times \Pi^*(c\delta) \colon |\Psi(U, V)|\lesssim \delta \right\rbrace.\]

Therefore, our result is reduced to show the following.
\[\mu_P\times \mu_{\Pi^*}\left\lbrace (U, V)\in P(\delta)\times \Pi^*(c\delta)\colon |\Psi(U, V)|\lesssim \delta \right\rbrace\lesssim \delta,\]
which follows from Theorem \ref{2.1.1}.

\section{Proof of Theorem \ref{CSbound}}
In this section, we present an elementary argument to study the incidence problem. 

For $p\in P$, let $I(p)$ be the set of hyperplanes $\pi\in \Pi$ such that $p\in \pi(C\delta)$. We observe that
\[I_{C\delta}(P, \Pi)=\sum_{p\in P}|I(p)|.\]
Thus, for $x, y > 0$ and $x\ge y$, by the H\"{o}lder inequality, we have 
\[I_{C\delta}(P, \Pi)\le |P|^{\frac{x}{x+y}}\left(\sum_{p}|I(p)|^{\frac{x+y}{y}}\right)^{\frac{y}{x+y}}.\]
This implies that 
\[I_{C\delta}(P, \Pi)^{x+y}\le |P|^{x}\left(\sum_{p}|I(p)|^{\frac{x+y}{y}}\right)^{y}.\]
To proceed further, we need to estimate the sum $\sum_{p}|I(p)|^{1+x/y}$. 
We have 
\begin{align*}
    \sum_{p\in P}|I(p)|^{1+x/y}=\sum_{p\in P}\sum_{\pi\colon p\in \pi(C\delta)}|I(p)|^{x/y}=\sum_{\pi\in \Pi}J(\pi),
\end{align*}
here $J(\pi)=\sum_{p\in \pi(C\delta)}|I(p)|^{x/y}$, which can be represented as follows
\begin{align*}
    J(\pi)=\sum_{p\in \pi(C\delta)}\left(\sum_{i}|I(p)\cap J_{2^i\delta}(\pi)
|\right)^{x/y} \lesssim \sum_{p\in \pi(C\delta)} \sum_{i=1}^{\log \delta^{-1}}|I(p)\cap J_{2^i\delta}(\pi)|^{x/y},
\end{align*}
where $J_{2^i\delta}(\pi)$ is the set of hyperplanes $\pi'$ such that $d_{\mathcal{A}}(\pi, \pi')\sim 2^i\delta$.

Since the set of planes is $(\delta, t, C_\Pi)$, we have 
\[|J_{2^i\delta}(\pi)| \lesssim (2^i \delta)^t|\Pi|.\]

 \begin{lemma}\label{4.1}
 Let $\pi$ and $\pi'$ be two hyperplanes in $\mathcal{A}(d, d-1)$. Assume these two planes both intersect the unit ball $B(0, 1)$ and $d_\mathcal{A}(\pi, \pi')=w>\delta$, then the intersection $\pi(\delta)\cap \pi'(\delta)\cap B(0, 1)$ can be covered by at most $\delta^{-(d-2)}$ cubes of parameters $\frac{\delta}{w}\times \delta\times \cdots\times \delta$ in $\mathbb{R}^d$. 
 \end{lemma}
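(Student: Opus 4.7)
The plan is to pick coordinates aligned with the two hyperplanes, show that in these coordinates the set $\pi(\delta)\cap\pi'(\delta)\cap B(0,1)$ lies in an axis-aligned rectangular box of dimensions $O(\delta) \times O(\delta/w) \times 2 \times \cdots \times 2$, and then tile that box by the prescribed boxes.

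Write $\pi = \{x\cdot n_\pi = c_1\}$ and $\pi' = \{x\cdot n_{\pi'} = c_2\}$ with unit normals $n_\pi, n_{\pi'}$ and $|c_1|, |c_2| \le 1$ (since each plane meets $B(0,1)$), and let $\theta$ be the angle between $n_\pi$ and $n_{\pi'}$. First I will argue that, whenever the intersection $\pi(\delta)\cap\pi'(\delta)\cap B(0,1)$ is nonempty, one must have $\sin\theta \gtrsim w$. The first term $\|\pi_{V_0}-\pi_{W_0}\|_{op}$ in $d_\mathcal{A}(\pi,\pi')$ is comparable to $\sin\theta$, so if $\sin\theta \ll w$ the offset term $|a-b|$ is $\sim w$; working in the $2$-plane spanned by $n_\pi$ and $n_{\pi'}$, this forces the two slabs $\pi(\delta)$, $\pi'(\delta)$ to be essentially parallel and separated by a distance $\gtrsim w > \delta$, and together with $|x_i|\le 1$ from the $B(0,1)$ constraint this rules out any common point (for $\delta$ small enough). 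So from now on we may assume $\sin\theta \gtrsim w$.

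Next, pick an orthonormal basis $(e_1, \ldots, e_d)$ with $e_1 = n_\pi$ and $e_2 \in \spa(n_\pi, n_{\pi'})$ orthogonal to $e_1$; then $e_3, \ldots, e_d$ span the direction of the $(d-2)$-plane $\pi\cap\pi'$. In these coordinates $\pi(\delta) = \{|x_1 - c_1| \le \delta\}$ and $\pi'(\delta) = \{|\cos\theta\,x_1 + \sin\theta\,x_2 - c_2| \le \delta\}$. Substituting $x_1 = c_1 + O(\delta)$ into the second inequality gives $|\sin\theta\cdot x_2 - (c_2 - c_1\cos\theta)| \le 2\delta$, so $x_2$ is confined to an interval of length $O(\delta/\sin\theta) = O(\delta/w)$. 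Intersecting with $B(0,1)$ bounds $|x_i| \le 1$ for $i \ge 3$. Hence the whole set is contained in an axis-aligned box of dimensions $2\delta \times O(\delta/w) \times 2 \times \cdots \times 2$.

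Finally, tile this box by boxes of side lengths $(\delta, \delta/w, \delta, \ldots, \delta)$ aligned with $(e_1, e_2, e_3, \ldots, e_d)$. Only $O(1)$ such boxes are needed in each of the $e_1$ and $e_2$ directions, while each of the $d-2$ directions along $\pi\cap\pi'$ requires $O(1/\delta)$ boxes, for a grand total of $O(\delta^{-(d-2)})$. The main obstacle is the first step, cleanly extracting the lower bound $\sin\theta \gtrsim w$ from the two-term definition of $d_\mathcal{A}$ together with the $B(0,1)$ constraint; once the coordinate system adapted to $n_\pi$ and $n_{\pi'}$ is in place, the confinement estimate for $x_2$ and the tiling count are essentially routine.
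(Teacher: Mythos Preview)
Your argument is essentially correct and is more direct than the paper's. The paper parametrizes hyperplanes via the ``code space'' of Hera--Keleti--M\'ath\'e (intercept and slopes with the max metric, equivalent to $d_{\mathcal{A}}$), cites \cite{HKM} for the inclusion $\pi(\delta)\subset \pi+(\{0\}\times(-c\delta,c\delta))$, and then computes that the set $\{t\in\mathcal{C}_{d-1}: |t_d-t_d'|<2c\delta\}$ is a strip of width $\lesssim\delta/\max_i|b_i-b_i'|\lesssim\delta/w$ between two parallel hyperplanes; a final rotation pins down the last coordinate to an interval of length $\lesssim\delta$. Your choice of an orthonormal frame $(e_1,\ldots,e_d)$ with $e_1=n_\pi$ and $e_2\in\spa(n_\pi,n_{\pi'})$ achieves the same confinement in one step, without importing the HKM machinery; the trade-off is that you must extract $\sin\theta\gtrsim w$ from the two-term definition of $d_{\mathcal{A}}$, whereas the paper's max-metric formulation gives $\max_i|b_i-b_i'|\gtrsim w$ essentially for free once the parallel case is dismissed.

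One caveat: your dichotomy ``either the intersection is empty or $\sin\theta\gtrsim w$'' is not literally true when $w$ is only barely larger than $\delta$. For instance, with $\pi=\{x_1=0\}$ and $\pi'=\{x_1=\tfrac{3}{2}\delta\}$ one has $\sin\theta=0$, $w=\tfrac{3}{2}\delta>\delta$, yet $\pi(\delta)\cap\pi'(\delta)\cap B(0,1)\ne\emptyset$. Your computation really shows that if $\sin\theta\le\epsilon w$ and the intersection is nonempty inside $B(0,1)$, then $w\lesssim\delta$; so the residual case is $\delta<w\lesssim\delta$, where $\delta/w\sim 1$. There the intersection is contained in the slab $\{|x_1-c_1|\le\delta\}\cap B(0,1)$, which one covers by $O(\delta^{-(d-2)})$ boxes of size $\delta\times(\delta/w)\times\delta\times\cdots\times\delta$ by aligning the long side $\delta/w\sim 1$ with any one of the $d-1$ unconstrained directions. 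With this easy patch your proof is complete; the paper's proof has an analogous informal treatment of the boundary case $\max_i|b_i-b_i'|=0$.
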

\begin{proof}
The proof is essentially a combination of a number of results from \cite{HKM}. Let $e_0=(0, \cdots, 0)$ and $\left\{ e_1,\cdots,e_d\right\}$ be the standard basis of $\mathbb{R}^d$. If we denote $H_0=\left\langle e_d \right\rangle$ then $H_0$ is a line and $\pi \cap H_0$ is a point of the form $(0,\cdots,0,a_0)$. We also denote $H_i = e_i+H_0$ so that $\pi \cap H_i$ is also a point of the form $(0,\cdots,1,\cdots,a^i)$. Let $b_i=a_i-a_0$, then we refer to $a_0$ as the vertical intercept and $b_i$ as the slopes of hyperplane $\pi$.

For each hyperplane $\pi$, we associate it to a point $x = x(\pi) = (a_0,b_1,\cdots,b_{d-1}) \in \mathbb{R}^d$ and note that the map $\varphi\colon \Pi \mapsto x(\Pi)$ is well-defined and injective. The space generated by this map is called \textbf{code space}. We endow this space with the maximum metric, namely, 
\[||x-x'||:=\max \left(|a_0-a_0'|, \max_{i=1, \ldots, d-1}\left(|b_i-b_i'|\right)\right).\]

As noted in \cite[Remark 4.2]{HKM}, the maximum metric on the code space is strongly equivalent to the metric between hyperplanes $d_\mathcal{A}$, in the sense that $||\varphi(\pi)-\varphi(\pi')||\sim d_{\mathcal{A}}(\pi, \pi')$.

Fix two hyperplanes $\pi, \pi' $, and denote their two corresponding codes by $x(\pi)$ and $x(\pi')$. Then we can present the plane $\pi$ by the equation
\[a_0+b_1t_1+\cdots+b_{d-1}t_{d-1}=t_d, (t_1, \ldots, t_{d-1})\in \mathbb{R}^{d-1},\]
where $x(\pi) = (a_0,b_1,\cdots,b_{d-1})$. Similarly, we can define $\pi'$ from its code coordinate. As presented in the proof of \cite[Lemma 4.3]{HKM} that there is a constant $c$ independent on $\delta$ such that:
\[\pi(\delta) \subset \pi +(\left\lbrace 0 \right\rbrace \times (-c\delta,c\delta)).\]
Therefore, we have
\[ \pi(\delta) \cap \pi'(\delta) \cap \mathcal{C} \subset \left\lbrace (t,u) \in \mathbb{R}^d: u \in B(t_d,c\delta) \cap B(t'_d,c\delta)\right\rbrace, \]
where $\mathcal{C}=\mathcal{C}_{d-1}\times L^1$ and $\mathcal{C}_{d-1}$ is the convex hull of $(0, \ldots, 0), e_1, \ldots, e_{d-1}$, and $L^1$ is the unit segment centered at the origin in $<e_d>$. 

If   $|a_0-a_0'| > \max_{i=1, \ldots, d-1}\left(|b_i-b_i'|\right)+D\delta$, then choosing  $D=2c $ implies $ B(t_d,c\delta) \cap B(t'_d,c\delta)$ is empty. This infers that 
$\pi(\delta) \cap \pi'(\delta) \cap \mathcal{C}=\emptyset$.

If $\max_{i=1, \ldots, d-1}(|b_i-b_i'|)>0$ and $B(t_d,c\delta) \cap B(t'_d,c\delta)\ne \emptyset$, we set $N = \left\lbrace t \in \mathcal{C}_{d-1}: |t_d-t'_d| <2c\delta\right\rbrace$, then we have 
\[N = \left\lbrace t \in  \mathcal{C}_{d-1}: p_-(t) \le t_{d-1} \le p_+(t)\right\rbrace\]
where
\[p_-(t) = p_-(t_1,\cdots,t_{d-2}) = \dfrac{-2c\delta -(a_0-a'_0)-\sum_{i=1}^{d-2}t_i(b_i-b'_i)}{b_{d-1}-b'_{d-1}},\]
and
\[p_+(t) = p_+(t_1,\cdots,t_{d-2}) = \dfrac{2c\delta -(a_0-a'_0)-\sum_{i=1}^{d-2}t_i(b_i-b'_i)}{b_{d-1}-b'_{d-1}}.\]
This implies that $N$ is the intersection of $\mathcal{C}_{d-1}$ and the strip between two parallel hyperplanes $\left\lbrace t_{d-1}=p_-(t)\right\rbrace $ and $\left\lbrace t_{d-1}=p_+(t)\right\rbrace $. By a direct computation, the distance between these two hyperplanes is equal to \[d(p_-(t),p_+(t))=\dfrac{2c\delta}{\sqrt{\sum_{i=1}^{d-2} (b_i-b'_i)^2}} \lesssim \dfrac{\delta}{\max_{i=1, \ldots, d-1}|b_i-b_i'|}.\]

Hence, $N$ is contained in a rectangular box that has the shortest side of length $d\lesssim \delta/w$ and the other $d-2$ sides of length at most $diam(\mathcal{C})=\sqrt{2}$.


To conclude the proof, we do a rotation if needed to assume that the hyperplane $\pi$ has $e_d$ as normal vector. This gives
    \[\pi: f(t) = a_0.\]
    The hyperplane $\pi'$ has the equation of the form
    \[\pi': g(t) = a_0'+b_1't_1+\cdots +b_{d-1}'t_{d-1}.\]
Under these hypotheses, if $(t,u)$ and $(t',u')$ are two elements inside $\pi(\delta)\cap \pi'(\delta)\cap \mathcal{C}$, then triangle inequality gives
\[|u-u'| = |u-f(t)+f(t')-u'| \le |u-f(t)|+|u-f(t')| <2\delta\]
Therefore, the $d$-th coordinate of the intersection part is contained in a box of dimension $\lesssim \delta$. In other words, the intersection $\pi(\delta)\cap \pi'(\delta)\cap \mathcal{C}$ can be covered by roughly $\delta^{2-d}$ dyadic boxes of size $\frac{\delta}{w}\times\underbrace{\delta\times\cdots \delta}_\textrm{ $d-1$ times}$. This completes the proof.
\end{proof}
\begin{lemma}\label{4.2}
    Fix $\pi\in \Pi$. For any $\delta<w\ll 1$, we have 
   \[\sum_{p\in \pi(c\delta)}|I(p)\cap J_{w}(\pi)|\lesssim |J_{w}(\pi)|\cdot |P|\delta^{s} \cdot \frac{1}{w^{\min\{s, 1\}}}\cdot \frac{1}{\delta^{d-2}}.\]
\end{lemma}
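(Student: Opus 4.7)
The plan is to swap the order of summation and then, for each hyperplane $\pi' \in J_w(\pi)$, count the number of points of $P$ lying in the intersection $\pi(c\delta)\cap \pi'(C\delta)\cap B(0,1)$ using the geometry provided by Lemma \ref{4.1} together with the $(\delta, s, C_P)$-set hypothesis. Concretely, I would start by rewriting
\[
\sum_{p\in \pi(c\delta)}|I(p)\cap J_{w}(\pi)|
=\sum_{\pi'\in J_{w}(\pi)}\#\bigl\{p\in P\colon p\in \pi(c\delta)\cap \pi'(C\delta)\bigr\},
\]
which reduces the task to a uniform estimate of $|P\cap \pi(c\delta)\cap \pi'(C\delta)\cap B(0,1)|$ over $\pi'\in J_w(\pi)$.

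For a fixed $\pi'\in J_w(\pi)$, since $d_\mathcal{A}(\pi,\pi')\sim w\gtrsim \delta$, Lemma \ref{4.1} (absorbing the harmless constants $c, C$ into the implicit constant) supplies a cover of $\pi(c\delta)\cap \pi'(C\delta)\cap B(0,1)$ by $\lesssim \delta^{-(d-2)}$ rectangular boxes of dimensions $\frac{\delta}{w}\times\delta\times\cdots\times\delta$. Each such box $R$ is contained in a Euclidean ball of radius $\sim \delta/w\in[\delta,1]$, so it remains to bound $|P\cap R|$ in terms of $w$, $\delta$, and $|P|$ and multiply by the number of boxes.

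I split cases according to the size of $s$ in order to obtain the sharp $w^{-\min\{s,1\}}$ factor. When $s\le 1$, I apply the $(\delta,s,C_P)$-set hypothesis directly to the enclosing ball of radius $\sim\delta/w$, giving $|P\cap R|\lesssim (\delta/w)^{s}|P|$. When $s>1$, a more efficient count comes from subdividing the long edge of $R$ into $\sim 1/w$ sub-boxes of side $\sim \delta$: each sub-box is contained in a $\delta$-ball, which by the $(\delta,s,C_P)$-set hypothesis meets $P$ in $\lesssim \delta^{s}|P|$ points, yielding $|P\cap R|\lesssim w^{-1}\delta^{s}|P|$. In either case the per-box bound is $\lesssim \delta^{s}|P|/w^{\min\{s,1\}}$; multiplying by the $\lesssim \delta^{-(d-2)}$ boxes and then summing over $\pi'\in J_w(\pi)$ gives the claimed estimate. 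The only point that needs care is the case split on $s$, which is essential to extract the sharp exponent $\min\{s,1\}$; apart from that the argument is a straightforward double count driven by Lemma \ref{4.1}.
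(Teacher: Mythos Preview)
Your proposal is correct and follows essentially the same approach as the paper: swap the order of summation, invoke Lemma \ref{4.1} to cover $\pi(c\delta)\cap\pi'(C\delta)\cap B(0,1)$ by $\lesssim \delta^{-(d-2)}$ boxes of dimensions $\frac{\delta}{w}\times\delta\times\cdots\times\delta$, and then bound the number of points of $P$ per box. The only cosmetic difference is that in the case $s>1$ the paper uses $\delta$-separation of $P$ directly (each box holds $\lesssim 1/w$ points) together with $|P|\delta^{s}\gtrsim 1$, whereas you subdivide the long side and apply the $(\delta,s,C_P)$ condition at scale $\delta$; these are equivalent.
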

\begin{proof}
    Fix $\pi\in J_{w}(\pi)$, then it follows from Lemma \ref{4.1} that $\pi\cap \pi'$ is contained in the union of $\delta^{2-d}$ boxes with parameters $\frac{\delta}{w}\times \delta\times \cdots\times \delta$. This means that the diameter of each box is $\sim \delta/w$. We now bound the above sum in two ways:

Since $P$ is $\delta$-separated, each box contains at most $\lesssim \frac{1}{w}$ elements from $P$. This gives 
\[\sum_{p\in \pi(c\delta)}|I(p)\cap J_{w}(\pi)|\lesssim |J_{w}(\pi)|\cdot \frac{1}{w}\cdot \frac{1}{\delta^{d-2}}.\]

We also observe that each box is contained in a ball of radius $\delta/w$, this infers each box contains at most $C_P |P|\delta^{s}\frac{1}{w^s}$ balls of $P$ since $P$ is $(\delta, s, C_P)$ set. In total, one has 
\[\sum_{p\in \pi(c\delta)}|I(p)\cap J_{w}(\pi)|\lesssim |J_{w}(\pi)|\cdot |P|\delta^{s} \cdot \frac{1}{w^s}\cdot \frac{1}{\delta^{d-2}}.\]
Note that by assumption we have $|P|\delta^{s} \geq 1$ so that combining these two estimates, we get
\[\sum_{p\in \pi(c\delta)}|I(p)\cap J_{w}(\pi)|\lesssim |J_{w}(\pi)|\cdot |P|\delta^{s}\cdot  \frac{1}{w^{\min\{s, 1\}}}\cdot \frac{1}{\delta^{d-2}}.\]
This completes the proof of the lemma. 
\end{proof}
We now continue the proof of the incidence estimate. In particular,
\begin{align*}
    J(\pi) & \lesssim \sum_{p\in \pi(C\delta)}\sum_{i}|I(p)\cap J_{2^i\delta}(\pi)|^{x/y}\\
    &\lesssim  \sum_{i}\sum_{p\in \pi(C\delta)}|I(p)\cap J_{2^i\delta}(\pi)|\cdot |J_{2^i\delta}(\pi)|^{x/y-1}\\
    &\lesssim \sum_{i}|(2^i \delta)^t|\Pi||^{x/y }\cdot |P|\delta^{s} \cdot \frac{1}{(2^i\delta)^{\min\{s, 1\}}}\cdot \frac{1}{\delta^{d-2}}.\\
     \end{align*}
We now fall into the following cases:

If $s\ge 1$ and $tx/y\ge 1$, then 
\[J(\pi)\lesssim |\Pi|^{x/y} \cdot |P|\delta^{s} \cdot \frac{1}{\delta^{d-2}}\cdot \sum_{i}(2^i\delta)^{\frac{tx}{y}-1}\lesssim |\Pi|^{x/y} \cdot |P|\delta^{s} \cdot \frac{1}{\delta^{d-2}}.\]

If $s\ge 1$ and $tx/y < 1$, then
\[J(\pi)\lesssim |\Pi|^{x/y} \cdot |P|\delta^{s} \cdot \frac{1}{\delta^{d-2}}\cdot \sum_{i}(2^i\delta)^{\frac{tx}{y}-1}\lesssim |\Pi|^{x/y} \cdot |P|\delta^{s+\frac{tx}{y}-1} \cdot \frac{1}{\delta^{d-2}}.\]

If $s< 1$ and $tx/y\ge s$, then
\[J(\pi)\lesssim |\Pi|^{x/y} \cdot |P|\delta^{s} \cdot \frac{1}{\delta^{d-2}}\cdot \sum_{i}(2^i\delta)^{\frac{tx}{y}-s}\lesssim |\Pi|^{x/y} \cdot |P|\delta^{s} \cdot \frac{1}{\delta^{d-2}}.\]

If $s< 1$ and $tx/y < s$, then
\[J(\pi)\lesssim |\Pi|^{x/y} \cdot |P|\delta^{s} \cdot \frac{1}{\delta^{d-2}}\cdot \sum_{i}(2^i\delta)^{\frac{tx}{y}-s}\lesssim |\Pi|^{x/y} \cdot |P|\delta^{\frac{tx}{y}-d+2}.\]

Plugging these estimates into $I_{C\delta}(P, \Pi)$, one has 

{\bf Case $1$:} If $s\ge 1$ and $tx/y\ge 1$, then
\[I_{C\delta}(P, \Pi)\lesssim |P||\Pi|\delta^{\frac{y}{x+y}\left(s-d+2\right)}.\]
If $s-d+2<0$ then we only have the trivial upper bound. Thus, this upper bound is valid in the range $s-d+2>0$. By choosing $x=y$, we obtain the upper bound
\[I_{C\delta}(P, \Pi) \lesssim |P||\Pi|\delta^{\frac{1}{2}\left(s-d+2\right)}. \]

{\bf Case $2$:} If $s\ge 1$ and $tx/y<1$, then
\[I_{C\delta}(P, \Pi)\lesssim |P||\Pi|\delta^{\frac{y}{x+y}\left(s+\frac{tx}{y}-d+1\right)}.\]
We observe that $t\le 1$ since $x\ge y$. If $s-d+2<0$, then we only have the trivial upper bound. If $s-d+2>0$, then we can choose 
$y=xt+\epsilon$, $\epsilon > 0$, to get the upper bound \[I_{C\delta}(P, \Pi)\lesssim |P||\Pi|\delta^{\frac{t}{1+t}\left(s-d+2\right) - \epsilon}.\]
{\bf Case $3$:} If $s< 1$ and $tx/y\ge s$, then
\[I_{C\delta}(P, \Pi)\lesssim |P||\Pi|\delta^{\frac{y}{x+y}\left(s-d+2\right)}.\]

In this case, since $s < 1$ and $d \geq 3$, we have $s-d+2$ is always negative, so only trivial upper bound is obtained.

{\bf Case $4$:} If $s< 1$ and $tx/y< s$, then
\[I_{C\delta}(P, \Pi)\lesssim |P||\Pi|\delta^{\frac{y}{x+y}\left(\frac{tx}{y}-d+2\right)}.\]

Since $tx/y<s$, $s<1$, and $d\ge 3$, we only have the trivial upper bound in this case.

Putting these cases together, Theorem \ref{CSbound} follows.
\section{Sharpness of Theorem \ref{main-theorem}}

In this section we provide an example to show the sharpness of our result in Theorem \ref{main-theorem}. The construction is mainly adapted from the two dimensional example due to Fu and Ren in \cite{FuRen}. For the reader's convenience, we sketch the ideas here. 

We first recall that Construction 4 in \cite{FuRen} shows that in the plane when $s + t \geq 3$, the incidence between balls and tubes is $\sim \delta^{-(s +t -1)} =\delta |P||\mathcal{T}|$. We will see that their example can be extended to higher dimensions which gives us a sharp upper bound. 

Note that the incidences results of Fu and Ren \cite{FuRen} in the plane are applied to the sets of balls and tubes satisfying Definition \ref{def:kt} (Katz-Tao $(\delta, s)$ sets). However, as we mentioned before, when the set is of size $ \sim \delta^{-s}$ then Definitions \ref{df:our} and \ref{def:kt} are equivalent. Moreover, in their sharpness example, what they constructed are actually $(\delta, s, C)$ sets. Therefore, we can extend their construction to higher dimensional spaces.  In the rest, we  denote $D=\delta^{-1}$ for convenience.

\subsection{Construction in $\mathbb{R}^3$}

The idea is to construct a configuration such that the intersection between this configuration and the plane $O_{xy}$ is exactly Construction 4 in \cite{FuRen}. Thus, we can reduce to the two-dimensional case when we consider the intersection between $\delta$-hyperplanes and $\delta$-balls with $O_{xy}$.  Then we move the plane $O_{xy}$ vertically by spacing $2\delta$ (two consecutive planes have distance $2\delta$) to get the desired bound. 

We consider balls of form: 
\[ (x-a_1)^2 +(y-a_2)^2+z^2=\delta^2.\]
Clearly these are $\delta$-balls in $\mathbb{R}^3$ whose intersection with $O_{xy}$ are also $\delta$-balls in dimension two. For each tube in Construction 4 in \cite{FuRen}, we just extend the tube vertically to the $\mathbb{R}^3$ which becomes a plane. Thus, if we put balls by spacing $2\delta$ in $\mathbb{R}^3$ (the centers of two consecutive balls have distance of $2\delta$), then the incidence between these balls and planes is $ \sim \delta^{-(s+t -1)} \cdot \delta^{-1} = \delta \cdot \delta^{-s-1} \cdot \delta^{-t}$ which is equal to $\delta |P||\Pi|$.

The next step is to check that the set of balls and hyperplanes constructed above satisfy Definition \ref{df:our}. Since the hyperplanes are just the tubes expanding vertically, the number of $\delta$-hyperplanes is equal to that of $\delta$-tubes, i.e, $\sim D^t$. Moreover, as already shown in the paper of Fu and Ren \cite{FuRen} (page 6) that the set of $\delta$-tubes is a $(\delta, t, C)$ set which gives that our set of $\delta$-hyperplanes is also a $(\delta, t, C)$ set.

Now for each copy of $O_{xy}$, there are $\sim D^{s}$ balls, as stated in \cite{FuRen}, and there are $\delta^{-1}$ copies in total. So $|P| \sim D^{s+1}$. Moreover, as shown in \cite{FuRen}, for each copy of $O_{xy}$, a ball of radius $w$ in the plane contains at most $Cw^{s}\delta^{-s}$ $\delta$-balls. So, a ball $B_w$ in $\mathbb R^3$ contains at most $Cw^{s}\delta^{-s-1}$ $\delta$-balls in $P$. This gives that
 \[|\{p \in P: p \subset B_w\}| \le C w^{s}\delta^{-s-1}\sim Cw^{s}|P|.\]
Therefore, the set $P$ is a $(
\delta,s,C)$-set.
\subsection{Construction in $\mathbb{R}^d$, $d\ge 4$}
For dimension $d>3$, the construction is inductively constructed. For instance, we consider $d=4$. We directly extend each hyperplane from above 3-dimensional construction to a hyperplane in $\mathbb R^4$ by just adding a variable $x_4$. Precisely, assume a hyperplane in above 3-dimensional construction is defined by $\{(x_1, x_2, x_3): ax_1+bx_2+cx_3=e\}$, then the extension in $\mathbb{R}^4$ is defined by $\{(x_1, x_2, x_3, x_4): ax_1+bx_2+cx_3 +0x_4=e\}.$ We now consider balls defined by the equation:
\[ (x_1-a_1)^2 +(x_2-a_2)^2+x_3^2+x_4^2=\delta^2.\] 
These balls are moved by spacing $2\delta$ in $x_3$ and $x_4$ directions. Thus, we see that when we fix the last coordinate $x_4$, the number of incidences between these hyperplanes and balls is the same as we constructed above in 3-dimension which is $\delta \delta^{-s-1} \delta^{-t}$. Moreover, we have $\delta^{-1}$ number of copies in $x_4$ direction which implies that the number of incidences is $ \delta \delta^{-s-1} \delta^{-t} \delta^{-1} =\delta |P||\Pi|$. It is not difficult to prove that the sets of balls and hyperplanes are $(\delta, s, C)$ and $(\delta, t, C)$ sets, respectively. For other dimensions $d > 4$, the construction works in the same way. Thus, the number of incidences 
$I(P,\Pi) \sim |\{\text{incidences in the plane}\}|\cdot \delta^{2-d} \sim \delta |P||\Pi|$. 

For the sharpness of Theorem \ref{CSbound}, one might think of the same idea by extending Constructions 1, 2, 3 in \cite{FuRen} to higher dimensions, but it does not match the upper bounds in Theorem \ref{CSbound} at least in the way we tried.

\section{Acknowledgements}
T. Pham would like to thank the Vietnam Institute for Advanced Study in Mathematics (VIASM) for the hospitality and for the excellent working condition. C.-Y. Shen was partially supported by NSTC grant 111-2115-M-002-010-MY5.

\end{document}